\newtheorem{theorem}{Theorem}[section]
\newtheorem{definition}[theorem]{Definition}
\newtheorem{lemma}[theorem]{Lemma}
\newtheorem{prop}[theorem]{Proposition}
\newtheorem{corollary}[theorem]{Corollary}
\newtheorem{remark}[theorem]{Remark}
\newtheorem*{intro-remark}{Remark}
\renewcommand{\epsilon}{\varepsilon}
\DeclareMathAlphabet{\mathpzc}{OT1}{pzc}{m}{it}
\newcommand{\N}{\mathbb{N}}
\newcommand{\C}{\mathbb{C}}
\newcommand{\LT}{\operatorname{LT}}
\newcommand{\Hom}{\text{Hom}}
\begin{document}
\thispagestyle{empty}
\title[A new algorithm for 3-sphere recognition]{A new algorithm for 3-sphere recognition}

\author{Michael Heusener}
\address{
Universit\'e Clermont Auvergne, Universit\'e Blaise Pascal, Laboratoire de Math\`ematiques, 
BP 10448, F-63000 Clermont-Ferrand \\ CNRS, UMR 6620, LM, F-63178 Aubiere, France}
\email{Michael.Heusener@math.univ-bpclermont.fr} 

\author{Raphael Zentner} 
\date{October 2016}
\address {Fakult\"at f\"ur Mathematik \\
Universit\"at Regensburg\\ 93040 Regensburg \\
Germany}
\email{raphael.zentner@mathematik.uni-regensburg.de}

\maketitle

\begin{abstract}
We prove the existence of a new algorithm for 3-sphere recognition based on Gr\"obner basis methods applied to the variety of $\text{\em SL}(2,\C)$-representation of the fundamental group. An essential input is a recent result of the second author, stating that any integer homology 3-sphere different from the 3-sphere admits an irreducible representation of its fundamental group in $\text{\em SL}(2,\C)$. This result, and hence our algorithm, build on the geometrisation theorem of 3-manifolds.
\end{abstract}

\section*{Introduction}
Rubinstein \cite{Rubinstein} has introduced an algorithm that recognises the 3-sphere, starting from a triangulation of the 3-manifold. His approach is based on normal surface theory, and it was simplified later by Thompson \cite{Thompson}. These algorithms have been established before the Poincar\'e conjecture and the geometrisation conjecture in dimension 3 were known to hold. Since then, the problem of 3-sphere recognition is equivalent to the recognition of the trivial group amongst fundamental groups of 3-manifolds. 

For instance, it is known that the following algorithm detects if a finite presentation $\langle \, S \, | \, R \, \rangle$ of the fundamental group $\pi$ of a 3-manifold represents the trivial group:

\begin{enumerate}
	\item Use two computers, Computer 1 and Computer 2. 
	\item On Computer 1, check successively whether there is a non-trivial homomorphism from $\pi$ to the symmetric group $S_n$, for $n=2, 3, \dots$. 
	\item On Computer 2, run the Todd-Coxeter algorithm applied to the trivial subgroup  $\{ 1 \} \subseteq \langle \, S \, | \, R \, \rangle$, see \cite{Neubueser,Annex-Neubueser}.
\end{enumerate} 
If the presentation represents a non-trivial group, then the program on Computer 1 will eventually  stop because 3-manifold groups are residually finite \cite{Hempel}. If the presentation represents the trivial group, the program on Computer 2 will eventually stop. 

Notice that the algorithm does stop because we suppose we are {\em guaranteed} that $\langle \, S \, | \, R \, \rangle$ is the presentation of the fundamental group of a 3-manifold. 

We suggest a new, somewhat simpler, and presumably more practical algorithm below. 

\section*{Acknowledgement} We wish to thank Martin Bridson, Stefan Friedl and Saul Schleimer for helpful conversation. 
The  authors are grateful for support by the SFB `Higher Invariants' at the University of Regensburg, funded by the Deutsche Forschungsgesellschaft (DFG).

\section{Review of Krull dimension, Hilbert polynomials, and Gr\"obner bases} 
By a (complex) affine algebraic variety we understand a subset of $\C^N$, for some $N \in \N$, which is the zero set of finitely many polynomials in the ring $R = \C[x_1, \dots, x_N]$. An affine algebraic variety is called irreducible if it is not the union of two non-empty strictly smaller varieties which are closed in the Zariski topology.

\begin{definition}
The Krull dimension of an affine algebraic variety $V$ is defined to be the maximal number $d$ such that there are irreducible sub-varieties $V_0, \dots, V_d$ which form a strictly increasing chain $V_0 \subseteq V_1 \subseteq \dots \subseteq V_d = V$. 
\end{definition}

We refer to \cite[Chapter 9, §3]{CLO} for the notions of Hilbert series and Hilbert polynomials of an ideal $I \subseteq \C[x_1, \dots, x_N]$. The following result is classical and can be found, for instance, in \cite[Section 5.6]{Kreuzer-Robbiano}. 

\begin{prop}\label{dimension and hilbert polynomial}
Let $V$ be an affine algebraic variety over $\C$, determined by the ideal $I \subseteq R$. Then the Krull dimension of $V$ is equal to the degree of the Hilbert polynomial of the ideal $I \subseteq R$,
\[
	\dim(V) = \deg(\text{HP}_{R/I}).
\]
\end{prop}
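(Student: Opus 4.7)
The plan is to reduce the statement to the case of a monomial (hence homogeneous) ideal via Gröbner basis techniques, and then to verify both quantities combinatorially on the monomial side. First I would fix a monomial order $\preceq$ on $R = \C[x_1, \dots, x_N]$ that refines the total degree (for example, the degree-lexicographic order), and pass to the initial ideal $\text{in}(I)$. By Macaulay's theorem, the standard monomials (those monomials not lying in $\text{in}(I)$) form a $\C$-basis of $R/I$; restricting to polynomials of degree at most $t$ gives the pointwise equality of Hilbert functions $\dim_\C (R/I)_{\leq t} = \dim_\C (R/\text{in}(I))_{\leq t}$ for all $t$, so $\text{HP}_{R/I} = \text{HP}_{R/\text{in}(I)}$ and, in particular, their degrees agree.

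The next step is to prove the corresponding equality of Krull dimensions $\dim(R/I) = \dim(R/\text{in}(I))$. The cleanest route is flat degeneration: the one-parameter family interpolating between $I$ and $\text{in}(I)$ (using the weight vector inducing $\preceq$) is flat over $\C[t]$, and in a flat family of affine schemes over a smooth curve the Krull dimension of the fibres is constant. This reduces the proposition to the case of a monomial ideal $J := \text{in}(I)$.

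For a monomial ideal $J$, the irreducible decomposition writes $J$ as an intersection $J = \bigcap_\sigma P_\sigma$ with $P_\sigma = (x_i : i \notin \sigma)$ for suitable subsets $\sigma \subseteq \{1, \dots, N\}$. Since $R/P_\sigma \cong \C[x_i : i \in \sigma]$ is a polynomial ring, the Krull dimension of $R/J$ is precisely $d := \max_\sigma |\sigma|$. On the Hilbert-polynomial side, the $\C$-basis of $R/J$ consists of monomials not divisible by any generator of $J$; the contribution from exponents supported on a maximum-size $\sigma$ equals $\binom{t+|\sigma|}{|\sigma|}$ up to lower-order terms, while any smaller $\sigma'$ contributes a polynomial of strictly smaller degree. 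Summing and applying an inclusion-exclusion cleanup yields a Hilbert polynomial of degree exactly $d$, matching $\dim(R/J)$.

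Finally, the identification $\dim(R/I) = \dim(V)$ is classical and follows from the Nullstellensatz: irreducible subvarieties of $V$ correspond bijectively to prime ideals of $R/I$, so chains translate into chains. The main obstacle in this outline is the middle step, the flat-degeneration argument that $\dim(R/I) = \dim(R/\text{in}(I))$; this is the one place where we cannot argue purely combinatorially and must either invoke flatness of the Gröbner degeneration or compare Noether normalizations of $R/I$ and $R/\text{in}(I)$ directly. Everything else is either bookkeeping on standard monomials or the elementary irreducible decomposition of a monomial ideal.
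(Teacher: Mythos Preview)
The paper does not prove this proposition at all; it states the result as classical and refers the reader to \cite[Section~5.6]{Kreuzer-Robbiano}. So there is no ``paper's own proof'' to compare against, and your outline already goes well beyond what the paper supplies.

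Your strategy is sound and in fact anticipates the content of the \emph{next} proposition in the paper (Proposition~\ref{computing dimension}), whose proof invokes Macaulay's theorem $\text{HP}_{R/I} = \text{HP}_{R/\langle \LT(I)\rangle}$ and the combinatorial description of $\dim V(\langle \LT(I)\rangle)$ for a monomial ideal. In effect you are folding the proof of Proposition~\ref{computing dimension} into a proof of Proposition~\ref{dimension and hilbert polynomial}; the paper instead takes Proposition~\ref{dimension and hilbert polynomial} as a black box and uses it to justify Proposition~\ref{computing dimension}.

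Two minor points. First, what you call the ``irreducible decomposition'' $J = \bigcap_\sigma P_\sigma$ with $P_\sigma = (x_i : i \notin \sigma)$ is really the decomposition into \emph{minimal primes}; the genuine irreducible (or primary) components of a monomial ideal can involve powers $x_i^{a_i}$. This does not affect your dimension conclusion, since Krull dimension is detected by minimal primes, but the terminology should be adjusted. Second, you correctly identify the flat-degeneration step $\dim(R/I) = \dim(R/\text{in}(I))$ as the only non-elementary input; note that once Proposition~\ref{dimension and hilbert polynomial} is granted (as the paper does), this equality of dimensions is an immediate \emph{consequence} of Macaulay's theorem rather than a separate ingredient, which is why the paper's logical ordering avoids the degeneration argument entirely.
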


We fix some graded order on the monomials of $R$. For instance, this can be the graded lexicographical order. Then every element of $R$ has a well-determined leading term. 
Following standard notation, we denote by $\langle \LT(I) \rangle$ the ideal generated by the leading terms of the elements in $I$, see for instance \cite{CLO}. Recall that a Gr\"obner basis for $I$ associated to the chosen order is a finite subset of $I$ whose leading terms generate $\langle \LT(I) \rangle$. 

\begin{prop}\label{computing dimension}
	The following algorithm computes the Krull-dimension of an affine algebraic variety $V(I)$ determined by an ideal $I \subseteq R$. 
	\begin{enumerate}
		\item Compute a Gr\"obner basis for $I$ with respect to the chosen order.
		\item Consider the subsets $S \subseteq \{ x_1, \dots, x_n \}$ such that $x^{s}$, defined to be the product of the elements of $S$, does not lie in the monomial ideal $\langle \LT(I) \rangle$. Let $m$ denote the maximal cardinality of all these subsets $S$. 
		\item $\dim(V(I)) = \dim(V(\langle \LT(I) \rangle)) = m$. 
	\end{enumerate}
\end{prop}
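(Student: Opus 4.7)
The plan is to split the double equality $\dim(V(I)) = \dim(V(\langle \LT(I)\rangle)) = m$ into its two halves and treat each separately; by Proposition~\ref{dimension and hilbert polynomial} the first half reduces to a statement about Hilbert polynomials, while the second half reduces to a combinatorial computation for a monomial ideal.

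For the first equality, I would invoke the classical fact, proved via the division algorithm with respect to a Gr\"obner basis of $I$, that the \emph{standard monomials}---the monomials of $R$ not contained in $\langle \LT(I)\rangle$---form a homogeneous $\C$-vector space basis of $R/I$. They also form a basis of $R/\langle \LT(I)\rangle$, because for any monomial ideal the quotient has as $\C$-basis precisely the monomials not in the ideal. Hence $R/I$ and $R/\langle \LT(I)\rangle$ have identical Hilbert functions, and therefore identical Hilbert polynomials; applying Proposition~\ref{dimension and hilbert polynomial} yields $\dim V(I) = \dim V(\langle \LT(I)\rangle)$.

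For the second equality, set $J = \langle \LT(I)\rangle$. Being the vanishing locus of a monomial ideal, $V(J)$ is a finite union of coordinate subspaces
\[
L_S := \{(a_1, \dots, a_n) \in \C^n : a_i = 0 \text{ whenever } x_i \notin S \},
\]
each of dimension $|S|$. The dimension of $V(J)$ is therefore the maximum of $|S|$ over those $S$ for which $L_S \subseteq V(J)$. A short unwinding shows that $L_S \subseteq V(J)$ if and only if no monomial with support contained in $S$ lies in $J$: indeed a monomial $x^\alpha$ is nonvanishing at some point of $L_S$ precisely when $\mathrm{supp}(\alpha) \subseteq S$. After passing, if necessary, to the radical $\sqrt{J}$ (a squarefree monomial ideal with $V(\sqrt{J}) = V(J)$), this infinite family of conditions collapses to the single-monomial criterion on $x^s = \prod_{x_i \in S} x_i$ appearing in the statement.

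The step I expect to be the main obstacle is precisely this last combinatorial translation, where an ostensibly infinite family of non-membership conditions is reduced to the single condition on $x^s$. The key is that replacing $J$ by its radical does not change the underlying variety, while $\sqrt{J}$ of a monomial ideal $J$ is generated by the squarefree parts of the generators of $J$; divisibility in $\sqrt{J}$ of the squarefree monomial $x^s$ then reads off directly which coordinate subspaces are contained in $V(J)$, and the maximal cardinality $m$ in the statement is exactly the dimension of the largest such subspace.
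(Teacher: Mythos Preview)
Your two-step strategy---equal Hilbert polynomials via Macaulay's theorem, then a combinatorial computation for the monomial ideal $\langle\LT(I)\rangle$---is exactly the paper's, which simply cites \cite{CLO} for both ingredients; you supply the standard-monomial argument for the first step and a direct coordinate-subspace argument for the second, where the paper only gives references.

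One point worth making explicit: your passage to $\sqrt{J}$ establishes that $\dim V(J)$ equals the maximal $|S|$ with $x^s \notin \sqrt{J}$, whereas step~(2) as written tests membership in $J = \langle\LT(I)\rangle$ itself. These can differ when $J$ is not radical (for $J = \langle x_1^2\rangle \subseteq \C[x_1,\dots,x_n]$ one has $x_1\cdots x_n \notin J$ yet $\dim V(J) = n-1$), so what your argument actually proves---and what \cite[Chapter~9, \S1]{CLO} in fact states---is the version with $\sqrt{J}$, or equivalently with the condition that \emph{no} monomial in the variables of $S$ lies in $\langle\LT(I)\rangle$. Your instinct to pass to the radical is therefore not a convenience but a correction.
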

\begin{proof}
We refer to \cite[Chapter 9, §1, Proposition 3]{CLO} for the proof that the second step determines the dimension of a variety associated to a monomial ideal such as $\langle \LT(I) \rangle$. The  following key observation is attributed to Macaulay, see \cite[Chapter 9, §3, Proposition 4]{CLO}: The Hilbert series (and hence the Hilbert polynomial) of the monomial ideal $\langle \LT(I) \rangle$ is equal to the Hilbert series of the monomial $I$, and hence we have equality of the associated Hilbert polynomials,
\[
	\text{\em HP}_{R/I} = \text{\em HP}_{R/\langle \LT(I) \rangle} \, . 
\]
 Therefore $\dim(V(I)) = m$ by Proposition \ref{dimension and hilbert polynomial}. 
\end{proof}

\section{The representation variety}

Let  $\pi$ be a finitely generated group, and let $ \langle s_1,\ldots, s_n \, | \, r_1, \dots, r_m \rangle$ be a presentation of $\pi$.
A $\text{\em SL}(2,\mathbb{C})$-repre\-sen\-ta\-tion of $\pi$ is a homomorphism 
$\rho\colon\pi\to\text{\em SL}(2,\mathbb{C})$.
\begin{definition} 
The 
$\text{\em SL}(2,\mathbb{C})$-representation variety is
\[
R(\pi) = \mathrm{Hom} (\pi,\text{\em SL}(2,\mathbb{C}))
\subseteq 
\text{\em SL}(2,\mathbb{C})^n\subseteq M(2,\C)^n\cong\C^{4n}\,.
\]
\end{definition}
The representation variety 
$R(\pi)$ is contained in $\text{\em SL}(2,\C)^n $ via the inclusion 
$\rho\mapsto\big(\rho(s_1),\ldots,\rho(s_n)\big)$, and
it is the set of solutions of a finite system of polynomial equations in the matrix coefficients (in fact, $4m+n$ many), hence it is an affine algebraic variety. 

%


\section{A new algorithm for detecting the trivial group among 3-manifold groups}

The following result has recently been established by the second author \cite{Zentner}. 
\begin{theorem}\label{existence_irr_rep}
	Let $Y$ be an integer homology 3-sphere different from the 3-sphere. Then there is an irreducible representation $\rho\colon \pi_1(Y) \to \text{SL}(2,\C)$. 
\end{theorem}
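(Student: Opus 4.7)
The plan is to follow the geometrisation theorem and treat the pieces of $Y$ separately, then control the gluing. Since $Y$ is an integer homology sphere distinct from $S^3$, its fundamental group is nontrivial by the Poincaré conjecture. Prime decomposition reduces to the case of irreducible $Y$ (a nontrivial connected-sum factor of a $\Z$HS is itself a $\Z$HS, so an irreducible representation on any summand extends by the trivial representation on the others). Thus I may assume $Y$ is irreducible, and geometrisation decomposes it along a (possibly empty) collection of incompressible tori into Seifert fibered and hyperbolic pieces.

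First I would handle the purely hyperbolic case. If $Y$ is hyperbolic, then Mostow--Prasad rigidity provides a discrete faithful holonomy representation $\bar\rho\colon\pi_1(Y)\to\text{PSL}(2,\C)$, which is automatically irreducible because its image is Zariski-dense. The obstruction to lifting $\bar\rho$ to $\text{SL}(2,\C)$ lies in $H^2(\pi_1(Y);\Z/2)$, which maps to $H^2(Y;\Z/2) = 0$ since $Y$ is a $\Z$HS; a careful examination (or an appeal to Culler's theorem that every $\text{PSL}(2,\C)$-representation of a 3-manifold group lifts) shows the lift exists, and any lift remains irreducible.

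Next I would handle the purely Seifert fibered case. Among closed Seifert fibered $\Z$HS's different from $S^3$, the base orbifold is $S^2$ with exceptional fibres of pairwise coprime multiplicities $a_1,\dots,a_n$, so $Y$ is a Brieskorn sphere $\Sigma(a_1,\dots,a_n)$. Using the standard presentation
\[
\pi_1(Y) = \langle x_1,\dots,x_n,h \mid [h,x_i]=1,\ x_i^{a_i}=h^{b_i},\ x_1\cdots x_n = h^{b}\rangle,
\]
I would send $h$ to $\pm I$ and choose rotations in $\text{SU}(2)$ of angles $\pi b_i/a_i$ about carefully chosen axes in $\R^3$, reducing the existence of an irreducible representation to a classical existence problem for spherical triangles (or its higher $n$ analogue). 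For $n\geq 3$ with all $a_i\geq 2$ this always has a solution with non-collinear axes, yielding an irreducible representation.

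The main obstacle is the toroidal case with a nontrivial JSJ decomposition. Here one must assemble representations on the pieces so that they match on each JSJ torus, and a boundary-matching constraint typically forces the restriction to a neighbourhood of each torus to be reducible, which threatens global irreducibility after gluing. My plan would be to use the flexibility in the hyperbolic case (a one-complex-dimensional deformation space at each cusp) or the one-parameter family of $\text{SU}(2)$-representations on a Seifert piece with torus boundary to arrange the boundary traces to coincide, and then apply a Bass--Serre / van Kampen argument to the graph-of-groups decomposition to rule out global reducibility: a globally reducible representation would have to preserve a common line, which for a nontrivial JSJ decomposition would contradict the assumed irreducibility on at least one piece. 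I expect this matching-and-irreducibility step to be the genuine difficulty, and it is plausible that a purely algebraic argument is not available, so that one ultimately needs the gauge-theoretic input of \cite{Zentner} (in the spirit of Kronheimer--Mrowka's proof of Property P, producing $\text{SU}(2)$-representations via instanton Floer homology) to guarantee that at least one piece carries an irreducible representation with the correct boundary behaviour.
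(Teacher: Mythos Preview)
The paper does not prove this theorem: it is quoted verbatim as a result of \cite{Zentner} and used as a black box. So there is nothing in the paper to compare your argument to line by line; the relevant comparison is with the actual proof in \cite{Zentner}.

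Your sketch is sound in the closed hyperbolic and closed Seifert fibred cases (the lifting obstruction for the holonomy vanishes because $H^2(Y;\Z/2)=0$ for a $\Z$HS, and Brieskorn spheres carry irreducible $\text{SU}(2)$-representations by the spherical-polygon argument). The genuine gap is exactly where you locate it: the toroidal case. Your proposed Bass--Serre argument only shows that a representation which is irreducible on some JSJ piece is globally irreducible; it does not produce such a representation. The pieces now have torus boundary, and one must simultaneously (i) find irreducible representations on the pieces and (ii) match their restrictions along every JSJ torus. Neither the Thurston deformation space at a cusp nor the circle of $\text{SU}(2)$-representations on a Seifert piece with boundary guarantees that the required boundary values are attained, and there are graph manifolds for which no purely topological or character-variety argument is known to close this.

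In \cite{Zentner} this step is handled by gauge theory: the key new ingredient is that the splice of two nontrivial knot complements in $S^3$ admits an irreducible $\text{SU}(2)$-representation, proved using holonomy perturbations of the Chern--Simons functional and Kronheimer--Mrowka's instanton results on knot groups. Your proposal ultimately appeals to \cite{Zentner} for precisely this input, which makes the argument circular as a proof of the theorem itself. So as written, your proposal is an accurate roadmap of where the difficulty lies, but not an independent proof.
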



With this at hand, we are able to prove the following

\begin{theorem}\label{algorithm}
Let $\pi =  \langle s_1,\ldots, s_n \, | \, r_1, \dots, r_m \rangle$ be a presentation of the fundamental group of a 3-manifold with $n$ generators. Then the following algorithm decides whether or not $\pi$ is the trivial group.
\begin{enumerate}
	\item Abelianise $\pi$. If the abelianisation is non-trivial, $\pi$ isn't the trivial group.
	\item If the abelianisation $\pi_{\text{ab}}$ of $\pi$ is trivial, fix a graded monomial order in $\C[x_1, \dots, x_{4n}]$, and compute a Gr\"obner basis for the affine algebraic variety 
	\begin{equation*}
		R(\pi) = \Hom(\pi,\text{SL}(2,\C)) \subseteq \C^{4n}\,.
	\end{equation*} 
	\item From the Gr\"obner basis, determine if the Krull dimension $\dim(R(\pi))$ of $R(\pi)$ is equal to $0$ or bigger than $0$, following the algorithm in Proposition \ref{computing dimension} above. 
	\item If $\dim(R(\pi)) \neq 0$, then $\pi$ is not the trivial group. \\ 
	If $\dim(R(\pi)) = 0$, then $\pi$ is the trivial group. 
\end{enumerate}
\end{theorem}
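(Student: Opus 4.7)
The plan is to verify correctness of each step; termination is automatic since Gr\"obner basis computations terminate on any finitely generated ideal, and the combinatorial steps in Proposition \ref{computing dimension} are finite.

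Step (1) is immediate: a non-trivial abelianisation forces $\pi$ to be non-trivial. For the remaining steps I would first handle the easy direction: if $\pi$ is the trivial group then every generator $s_i$ must map to $I\in SL(2,\C)$, so $R(\pi)$ is a single point in $\C^{4n}$ with Krull dimension $0$, which matches the output at step (4).

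The heart of the argument is the case $\pi_{\text{ab}}=0$ with $\pi$ possibly non-trivial. Assuming $Y$ is closed and orientable (the standard hypothesis for 3-sphere recognition), the vanishing $\pi_{\text{ab}}=H_1(Y;\Z)=0$ combined with Poincar\'e duality makes $Y$ an integer homology 3-sphere. If $\pi\neq 1$, then $Y\neq S^3$, so Theorem \ref{existence_irr_rep} supplies an irreducible representation $\rho_0\colon \pi\to SL(2,\C)$. I would then exploit the conjugation action of $SL(2,\C)$ on $R(\pi)$: by Schur's lemma the stabiliser of $\rho_0$ is the finite centre $\{\pm I\}$, so the orbit $SL(2,\C)\cdot\rho_0\subseteq R(\pi)$ is a locally closed subvariety of dimension $\dim SL(2,\C)=3$. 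In particular $\dim R(\pi)\geq 3 > 0$.

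Taking contrapositives gives the decisive implication at step (4): if $\dim R(\pi)=0$ while $\pi_{\text{ab}}=0$, then the underlying 3-manifold admits no irreducible $SL(2,\C)$-representation of its fundamental group, which by Theorem \ref{existence_irr_rep} forces $Y=S^3$ and hence $\pi=1$. The main obstacle, and the deep input on which everything rests, is Theorem \ref{existence_irr_rep} itself, whose proof uses geometrisation; the orbit-dimension estimate for irreducible representations and the Gr\"obner basis dimension calculation (Proposition \ref{computing dimension}) are comparatively routine.
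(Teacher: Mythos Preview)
Your proposal is correct and follows essentially the same route as the paper: invoke Theorem~\ref{existence_irr_rep} to obtain an irreducible representation when $\pi_{\text{ab}}=0$ and $\pi\neq 1$, then use the $\text{SL}(2,\C)$-conjugation orbit through that representation to force $\dim R(\pi)\geq 3$. The paper isolates the orbit-dimension step as Lemma~\ref{dimension} (citing Lubotzky--Magid and Newstead for closedness and dimension of the orbit), while you argue it directly via Schur's lemma and the fact that orbits are locally closed; the two versions are interchangeable for the conclusion needed here.
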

\medskip 
\begin{remark}
 Theorem~\ref{algorithm} is in contrast to the following general fact: Whether or not a finite presentation represents the trivial group is undecidable, see \cite{Adyan1,Adyan2,Rabin} and for a survey \cite{Miller}.
\end{remark}

\begin{remark}
	In the preceding result, the presentation is not required to be geometrical (for instance, obtained from a Morse decomposition of a 3-manifold, or a triangulation.)
	However, we do require that the presentation is that of the fundamental group of a 3-manifold. In general, it is undecidable whether or not a given group is the fundamental group of a 3-manifold, see for instance the work of Groves, Manning, and Wilton \cite{GMW}, and the work Aschenbrenner, Friedl, Wilton for a survey \cite{AFW}. 
\end{remark}

The proof of this Theorem \ref{algorithm} will make use of the following lemma.

\begin{lemma}\label{dimension}
Let $\pi$ be a finitely generated group.
	If the representation variety $V(\pi)$ contains an irreducible representation, 
	then $\dim V(\pi) \geq 3$. 
\end{lemma}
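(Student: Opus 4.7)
The plan is to exhibit an irreducible sub-variety of $R(\pi)$ of dimension at least $3$ containing the given irreducible representation $\rho$. The natural candidate is the conjugation orbit of $\rho$ under $\text{SL}(2,\C)$.

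First I would verify that conjugation
\[
	\text{SL}(2,\C) \times R(\pi) \to R(\pi), \qquad (g,\rho) \mapsto g\rho g^{-1},
\]
is a regular (i.e.\ polynomial) action of the algebraic group $\text{SL}(2,\C)$ on $R(\pi)$. This is immediate since each coordinate of $g\rho(s_i)g^{-1}$ is a polynomial in the entries of $g$, $g^{-1}$ (and $\det(g)=1$), and of $\rho(s_i)$.

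Next I would compute the stabilizer $\mathrm{Stab}(\rho) \subseteq \text{SL}(2,\C)$ of an irreducible representation. An element $g \in \text{SL}(2,\C)$ stabilizes $\rho$ if and only if $g$ commutes with $\rho(s_i)$ for every generator $s_i$, that is, if and only if $g$ commutes with every element of the image of $\rho$. By Schur's lemma, the commutant of an irreducible subset of $M(2,\C)$ consists exactly of scalar matrices; intersecting with $\text{SL}(2,\C)$ yields $\mathrm{Stab}(\rho) = \{ \pm I\}$, a finite (hence $0$-dimensional) group.

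Finally I would invoke the standard orbit-stabilizer principle for algebraic group actions: the orbit $\mathcal{O}_\rho := \text{SL}(2,\C) \cdot \rho$ is a locally closed, irreducible sub-variety of $R(\pi)$, and
\[
	\dim \mathcal{O}_\rho = \dim \text{SL}(2,\C) - \dim \mathrm{Stab}(\rho) = 3 - 0 = 3.
\]
Taking the Zariski closure $\overline{\mathcal{O}_\rho} \subseteq R(\pi)$ yields an irreducible sub-variety of dimension $3$, and therefore $\dim R(\pi) \geq 3$, as claimed.

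I do not anticipate any real obstacle here: the only points requiring a sentence of justification are Schur's lemma in the $2$-dimensional setting and the dimension formula for orbits of algebraic group actions, both of which are standard.
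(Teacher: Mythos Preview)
Your proposal is correct and follows essentially the same route as the paper: both arguments exhibit the conjugation orbit of the irreducible representation, observe that its stabiliser is the centre $\{\pm I\}$ of $\text{SL}(2,\C)$, deduce that the orbit has dimension $3$, and conclude $\dim R(\pi)\geq 3$. The only cosmetic difference is that the paper cites \cite{LM85} to get that the orbit is actually Zariski-closed (not merely locally closed) and cites \cite{Newstead1978} for the dimension formula, whereas you appeal directly to Schur's lemma and the general orbit--stabiliser formula and then pass to the closure; either way the conclusion is the same.
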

\begin{proof}
Let $\rho\colon\pi\to \text{\em SL}(2,\C)$ be an irreducible representation. 
The group $\text{\em SL}(2,\C)$ acts by conjugation on the representation variety $R(\pi)$.
More precisely, for $A\in\text{\em SL}(2,\C)$  we define
$(A.\rho) (\gamma) = A \rho(\gamma) A^{-1}$ for all $\gamma\in\pi$, and
we let
$
 O(\rho)=\{A.\rho\mid A\in\text{\em SL}(2,\mathbb{C})\}
$ 
denote the orbit of $\rho$. Notice that the stabiliser of an irreducible representation is the centre of 
$\text{\em SL}(2,\C)$.

Now, Theorem~1.27 of \cite{LM85} implies that $O(\rho)\subset R(\pi)$ 
is a closed algebraic subset,
and Lemma 3.7 of \cite{Newstead1978} implies that $\dim O(\rho) =3$ since $\rho$ is irreducible.
Hence $3 = \dim O(\rho) \leq \dim R(\pi)$ by definition of the notion of Krull dimension.
\end{proof}

\begin{proof}[Proof of Theorem \ref{algorithm}]

If $\pi$ has trivial abelianisation and is not the fundamental group of the 3-sphere,  then there is an irreducible representation $\rho: \pi \to \text{\em SL}(2,\C)$ by Theorem \ref{existence_irr_rep}. By Lemma \ref{dimension}, we conclude that $R(\pi) = \Hom(\pi,\text{\em SL}(2,\C))$ has Krull dimension at least $3$. 
Hence if the Gr\"obner basis computation yields $\dim(V) = 0$, then $\pi$ must be the trivial group. 
\end{proof}

\section{A new algorithm for 3-sphere recognition}

We think of a 3-manifold $Y$ as being given by a Heegaard diagram. From this we can read off a presentation of the fundamental group. If $g$ is the genus of the Heegaard diagram, and if $k$ is the number of intersections in the Heegaard diagram (counted absolutely, and not up to sign), we obtain a presentation of the fundamental group $\pi_1(Y)$ of length $g+k$. 
\\
\begin{corollary}
	The combination of \begin{enumerate}
	\item
	the standard algorithm to pass from a Heegaard diagram of a 3-manifold $Y$ to a presentation $\pi =  \langle \, S \, | \, R \,\rangle$ of its  fundamental group together with 
	\item the algorithm of Theorem \ref{algorithm}
	\end{enumerate}
 is an algorithm that detects the 3-sphere.  
\end{corollary}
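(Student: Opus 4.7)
The plan is to assemble the corollary by simply chaining two well-understood pieces: a standard combinatorial procedure, and Theorem \ref{algorithm}. First I would recall that if $Y$ is presented by a Heegaard diagram of genus $g$ with $k$ intersection points between the two systems of attaching curves, then reading off the attaching words around the basepoint in the handlebody produces, in finite time and in a completely combinatorial manner, a presentation $\pi_1(Y) = \langle S \,|\, R \rangle$ with $|S| = g$ generators and one relator for each attaching curve of length bounded by $k$. This is textbook material (van Kampen applied to the Heegaard decomposition) and is manifestly algorithmic.

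Next I would feed this presentation into the algorithm of Theorem~\ref{algorithm}. Since the input is guaranteed by construction to be the presentation of $\pi_1$ of a closed $3$-manifold, the hypotheses of Theorem~\ref{algorithm} are met, and the algorithm terminates with a correct yes/no answer to the question ``is $\pi_1(Y)$ trivial?'' The combined procedure is therefore an algorithm in the genuine sense: it always halts, and the output is determined by $Y$.

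Finally I would invoke the Poincar\'e conjecture (now a theorem, via Perelman's proof of geometrisation, which is the same deep input already underlying Theorem~\ref{existence_irr_rep}) to conclude that $\pi_1(Y) = \{1\}$ if and only if $Y$ is homeomorphic to $S^3$. Thus the composite algorithm outputs ``yes'' precisely when $Y \cong S^3$, which is exactly the claim of the corollary.

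There is no real obstacle here: all the mathematical content has been placed in Theorem~\ref{algorithm} (and, through it, in Theorem~\ref{existence_irr_rep}). The only things to verify are that the Heegaard-to-presentation step is effective (it is, and standard) and that trivial fundamental group characterises $S^3$ among closed $3$-manifolds (Poincar\'e/Perelman). The proof should therefore be essentially a two-sentence remark citing these two ingredients.
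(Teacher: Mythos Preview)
Your proposal is correct. The paper gives no proof of this corollary at all, treating it as immediate from Theorem~\ref{algorithm}; your argument simply makes explicit the two ingredients the paper leaves implicit (effectivity of the Heegaard-to-presentation step, and the Poincar\'e conjecture to pass from $\pi_1(Y)=\{1\}$ to $Y\cong S^3$), both of which the paper has already invoked in the introduction and abstract.
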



	The input data of this algorithm is given by a Heegaard diagram, and not by a triangulation, as it is the case in the Rubinstein-Thompson algorithm. This may turn out more practical in concrete cases. In fact, we only need a presentation of the fundamental group. 
	
	Furthermore, any triangulation comes with a canonical Heegaard diagram, and a theorem of Reidemeister and Singer states that any two Heegaard diagrams of the same 3-manifold are stably equivalent 
	(see \cite{Fomenko-Matveev}). However, it still seems unknown how much the Heegaard genus of a 3-manifold can differ from the Heegaard genus of a diagram coming from a minimal triangulation, and how many stabilisations/destabilisations one needs to pass from one to the other. From this point of view, it may be that our algorithm uses essentially smaller input data than the previously mentioned one.

\section{Complexity questions}
For the notion of complexity classes such as $\mathsf{NP}$ we refer to \cite{Garey-Johnson}. 

\subsection{Our algorithm}

The question whether finite systems of polynomial equations define algebraic varieties of dimension greater or equal to $d$ is $\mathsf{NP}$-hard for any $d \geq 0$ by a result of Koiran, see \cite[Proposition~1.1]{Koiran}.

In our situation, we know a few more facts about the varieties $R(\pi)$ in question. For instance, these are always determined by polynomial equations with integer coefficients. Furthermore, $R(\pi)$ always contains the trivial representation, and therefore this variety is always non-empty. 

It is unclear to us whether these facts decrease the complexity of our algorithm, but given Koiran's result, we rather expect the algorithm not to be of polynomial length in terms of the input size, and hence not to lie in the complexity class $\mathsf{P}$. This also seems consistent with some numerical evidence we have obtained. 

\subsection{$\mathsf{NP}$ algorithms}

Schleimer has shown that the 3-sphere recognition problem lies in the complexity class $\mathsf{NP}$, i.e. there is a non-deterministic algorithm that detects the 3-sphere in polynomial time \cite{Schleimer}. Kuperberg has shown that the unknot detection problem lies in the complexity class $\mathsf{coNP}$, provided the generalised Riemann hypothesis (GRH) holds. Based on Kuperberg's approach, the second author has shown in \cite{Zentner} that the 3-sphere recognition problem lies in the complexity class $\mathsf{coNP}$ modulo GRH.

\bibliographystyle{siam} 
\bibliography{3-sphere_recognition}
\end{document}